\newcommand{\R}{\mathbb{R}}
\newcommand{\C}{\Bbb{C}}
\newcommand{\N}{\mathbb{N}}
\newcommand{\be}{\begin{equation}}
\newcommand{\ee}{\end{equation}}
\newtheorem{theorem}{Theorem}[section]
\theoremstyle{plain}
\newtheorem{definition}{Definition}[section]
\newtheorem{proposition}{Proposition}[section]
\DeclareMathOperator{\ca}{\cos_\alpha}
\DeclareMathOperator{\sa}{\sin_\alpha}
\newcommand{\Ea}{E_\alpha}
\begin{document}

\title{On the Invalidity of Fourier Series Expansions of Fractional Order}
\maketitle
\begin{center}
{\bf Peter Massopust\footnote{Research partially supported by DFG grant MA5801/2-1} and Ahmed I. Zayed
\vskip 8pt
Center of Mathematics, Lehrstuhl M6
\\
Technische Universit\"at M\"unchen\\
Garching b. Munich, Germany\\
massopust@ma.tum.de\\
and\\
DePaul University \\
Chicago, IL 60614, USA \\
azayed@condor.depaul.edu }
\end{center}

\vskip0.5cm \noindent
{\bf Abstract:}
 The purpose of this short paper is to show the invalidity of a Fourier series expansion of fractional order as derived by G. Jumarie in a series of papers. In his work the exponential functions $e^{in\omega x}$ are replaced by the Mittag-Leffler functions $E_\alpha \left (i (n\omega x)^\alpha\right) ,$ over the interval $[0, M_\alpha/ \omega]$ where $0< \omega<\infty $ and $M_\alpha$ is the period of the function $E_\alpha \left( ix^\alpha\right),$ i.e., $E_\alpha \left( ix^\alpha\right)=E_\alpha \left( i(x+M_\alpha)^\alpha\right).$
\vskip0.25cm
\noindent
{\bf Keywords and Phrases:} \noindent {\small Fractional derivative, Fourier series of fractional order, Mittag-Leffler function.}
\vskip 4pt\noindent
{\bf 2010 Mathematics Subject Classification:} Primary 26A33, 33E12 ; Secondary 42A16, 34K37.

\vskip0.5cm

\section{Introduction}
The Mittag-Leffler function which is named after the Swedish mathematician Mittag-Leffler, who introduced it in 1903 \cite{Mittag}, has recently been the object of many research papers because it plays an important role in Fractional Calculus. It appears in solutions of some fractional differential and integral equations and as a result it has
found some physical applications \cite{Main}.

 The Mittag-Leffler function  is an entire function of order $1/\alpha, \alpha>0$, defined by the power series
 $$  E_\alpha (z) := \sum_{k=0}^\infty \frac{z^k}{\Gamma (1 + k \alpha)},\quad  z\in \C .$$
 It is easy to see that

 $$E_1(z)=e^z, \quad E_2(z^2)= \cosh z.  $$ Because the Mittag-Leffler function generalizes the exponential function, one wonders if it generalizes or at least shares some properties of the exponential function. Several people explored that idea, among them is G. Jumarie, who derived the following proposition \cite{Jum08}	
 \begin{proposition}
Suppose that $f:\R^+_0\to \R$ is continuous and possesses for all $k\in \N$ Riemann-Liouville fractional derivatives of order $k \alpha$, with $0 < \alpha \leq 1$. Let $h > 0$. Then
\be\label{taylor}
f(x + h) = \sum_{k=0}^\infty \frac{h^{k \alpha}}{\Gamma (1 + k \alpha)}\,f^{(k \alpha)} (x),
\ee
where $D^\alpha$ is the modified Riemann-Liouville fractional derivative.
\end{proposition}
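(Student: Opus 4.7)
The plan is to imitate the classical derivation of Taylor's theorem, replacing the ordinary derivative by Jumarie's modified Riemann--Liouville derivative $D^\alpha$. Two algebraic properties of this derivative make the argument look formally identical to the classical one: $(i)$ $D^\alpha$ annihilates constants, and $(ii)$ for $\beta\ge\alpha$ one has the power rule
\[
D^\alpha h^{\beta}\;=\;\frac{\Gamma(1+\beta)}{\Gamma(1+\beta-\alpha)}\,h^{\beta-\alpha}.
\]

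Fix $x$ and set $g(h):=f(x+h)$. I would postulate a fractional power expansion
\[
g(h)\;=\;\sum_{k=0}^{\infty}a_k(x)\,h^{k\alpha},
\]
apply $D_h^{j\alpha}$ term by term, and invoke $(i)$--$(ii)$. Terms with $k<j$ collapse to $0$ because iterated differentiation eventually leaves a constant, which is annihilated; terms with $k>j$ carry a positive power of $h$ and vanish at $h=0$; and the diagonal term $k=j$ contributes exactly $a_j(x)\,\Gamma(1+j\alpha)$. Identifying $g^{(j\alpha)}(0)$ with $f^{(j\alpha)}(x)$, a translation property built into Jumarie's framework, gives $a_j(x)=f^{(j\alpha)}(x)/\Gamma(1+j\alpha)$, which is \eqref{taylor}.

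The place where I would expect real difficulty is the very first formal step: why should a continuous function possessing all fractional derivatives $f^{(k\alpha)}$ admit a convergent expansion in $h^{k\alpha}$? The classical analogue $\alpha=1$ demands real-analyticity, far beyond $C^\infty$, so a fully rigorous argument has to produce either a fractional integral remainder, obtained by iterating a fundamental-theorem identity of the form $f(x+h)-f(x)=I_h^{\alpha}\!\left[D_h^{\alpha}f(x+\cdot)\right]$ and driving the remainder to $0$, or a Cauchy-type integral representation based on $E_\alpha$. Either route requires quantitative growth bounds on the iterated fractional derivatives $f^{(k\alpha)}$ that are simply not part of the stated hypotheses -- which is precisely the gap that, as the paper's title suggests, will turn out to be fatal once \eqref{taylor} is used to build a ``fractional Fourier series''.
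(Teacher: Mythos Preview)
The paper contains no proof of this proposition: it is quoted as Jumarie's result (with citation), and the whole thrust of the paper is to show that the proposition and its consequences are \emph{invalid} for $0<\alpha<1$. So there is nothing to compare your argument against on the paper's side.

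That said, your reconstruction is exactly the formal heuristic Jumarie uses, and you have put your finger on the right spot. The step ``postulate a fractional power expansion $g(h)=\sum_k a_k(x)h^{k\alpha}$'' is not a technicality awaiting a remainder estimate; it is the place where the argument actually fails. Apply the proposition to $f(x)=\Ea(\lambda x^\alpha)$: since $D^\alpha \Ea(\lambda x^\alpha)=\lambda\,\Ea(\lambda x^\alpha)$, the right-hand side of \eqref{taylor} becomes $\Ea(\lambda h^\alpha)\,\Ea(\lambda x^\alpha)$, and one obtains the semigroup identity $\Ea(\lambda(x+h)^\alpha)=\Ea(\lambda x^\alpha)\,\Ea(\lambda h^\alpha)$. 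The paper proves in its Theorem that this identity holds \emph{only} for $\alpha=1$ (the sole continuous solutions of $F(x)F(y)=F(x+y)$ are exponentials), so the proposition is false for every $0<\alpha<1$. No growth bound on the $f^{(k\alpha)}$, no fractional remainder formula, and no Cauchy-type representation can rescue it: the gap you identified is genuinely unfillable, not merely unfilled.
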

He used this proposition to show that
\be
f(x + h) = E_\alpha (h^\alpha D^\alpha) f (x),
\ee
which he
 then employed  to establish that
\be
D^\alpha E_\alpha (\lambda x^\alpha) = \lambda E_\alpha (\lambda x^\alpha), \quad x \geq 0.
\ee
This equation is a generalization of the relation
$$ \frac{d}{dx} e^{\lambda x}=\lambda e^{\lambda x},$$
and reduces to it when $\alpha =1.$
Moreover, he also showed that the semi-group property $e^{\lambda x} e^{\lambda y} =e^{\lambda (x+y)}$ may be extended to
\be
\Ea (\lambda x^\alpha) \Ea (\lambda y^\alpha) = \Ea(\lambda (x+y)^\alpha), \quad 0 < \alpha \leq 1,\;\; \lambda\in \C. \label{semigp}
\ee
In fact, in \cite{Jum12} he gave several proofs of this relation.

Analogous to the definition of $\cos x$ and $\sin x$ in terms of the exponential function, Jumarie defined
 $\ca (x^\alpha) $ and $\sa (x^\alpha) $ by
\[
E_\alpha  (i x^\alpha) = \ca (x^\alpha) + i \sa (x^\alpha), \quad x\geq 0.
\]
which leads to
$$ \ca^2 (x^\alpha) + \sa^2 (x^\alpha) =  \Ea(i x^\alpha) \Ea(-i x^\alpha). $$
Therefore,
$$ \ca^2 (x^\alpha) + \sa^2 (x^\alpha) = 1 ,$$ is equivalent to
$$  \Ea(i x^\alpha) \Ea(-i x^\alpha)=1, $$
that is
 \begin{equation}
 (\Ea (i x^\alpha))^{-1} = \Ea(-i x^\alpha). \label{Eq1}
 \end{equation}
But here we note that this relation does not follow from the functional equation
$$  \Ea (\lambda x^\alpha) \Ea (\lambda y^\alpha) = \Ea(\lambda (x+y)^\alpha) $$
because if we put $y=-x,$ we have
$$  \Ea (\lambda x^\alpha) \Ea (\lambda (-x)^\alpha) = \Ea(\lambda (x-x)^\alpha)=\Ea(0)=1 $$
which implies that
\be
(\Ea (\lambda x^\alpha))^{-1} = \Ea (\lambda (-x)^\alpha) ,
\ee
and when combined with (\ref{Eq1}) leads to
\be
(\Ea (i x^\alpha))^{-1} = \Ea (i (-x)^\alpha) = E_\alpha (-ix^\alpha).\label{Eq2}
\ee

Under the assumption that there exists a real number $M_\alpha>0$ such that
 $ \Ea (i M_\alpha^\alpha)$ $=1$,
 the periodicity of the function $E_\alpha (ix^\alpha),$ and hence, the periodicity of $\ca (x^\alpha)$ and $\sa (x^\alpha),$ will follow
  since
$$  \Ea (i x^\alpha)= \Ea (i x^\alpha) \Ea (iM_\alpha^\alpha) = \Ea (i (x+M_\alpha))^\alpha ) .$$

 The orthogonality of $\ca (mx)^\alpha $ and $\sa (mx)^\alpha$ now follows from the periodicity of these functions together with the fact that
$$ \ca^2 (x^\alpha) + \sa^2 (x^\alpha) =1.$$
It was also shown that $\ca (mx)^\alpha $ and $\sa (mx)^\alpha$  form an orthogonal basis for $L^2[0, M_\alpha].$ In \cite{Jum08}, Sec. 6, he obtained an expansion in terms of fractional sine and cosine functions as follows. Let $f$ be a periodic function with period $M_\alpha/\omega.$ Then $f$ can be expanded in a Fourier-type series of the form
$$f(x)=a_0/2 +\sum_{n=1}^\infty a_n \cos_\alpha (n \omega x)^\alpha + b_n \sin_\alpha (n \omega x)^\alpha , $$ 	
where
$$ a_n=2 \left( \frac{\omega}{M_\alpha}\right)^\alpha \int_0^{M_\alpha/\omega} f(x)\cos_\alpha (n \omega x)^\alpha (dx)^\alpha ,$$
and
$$ b_n=2\left( \frac{\omega}{M_\alpha}\right)^\alpha \int_0^{M_\alpha/\omega} f(x)\sin_\alpha (n \omega x)^\alpha (dx)^\alpha .$$
Parseval's relation takes the form
$$ \int_0^{M_\alpha/\omega} f^2(x)(dx)^\alpha=\frac{1}{2}\left( \frac{M_\alpha}{\omega}\right)^\alpha \sum_{n=0}^\infty \left( a_n^2 +b_n^2 \right). $$

These results paved the way to developing a parallel approach to harmonic analysis based on these fractional cosine and sine functions.

Fascinated with these results, the authors of this article tried to derive new results using these fractional trigonometric functions and their orthogonality. However, they soon realized that
there were some errors in Jumarie's derivations and some of the above relations cannot hold. The purpose of this paper is to show exactly that.

\section{Preliminary Result}
In this section we show by  examples that some of Jumarie's results are invalid. First, we observe from Eq. \eqref{Eq2} that we should have $\Ea (i (-x)^\alpha)=\Ea (-i x^\alpha).$
But from the definition
$$
E_\alpha (\lambda x^\alpha) = \sum_{k=0}^\infty \frac{\lambda^k x^{\alpha k}}{\Gamma (1 + k \alpha)},
$$
we have
$$ E_\alpha (-i x^\alpha) = \sum_{k=0}^\infty \frac{(-1)^k i^k x^{\alpha k}}{\Gamma (1 + k \alpha)}$$
and
$$ E_\alpha (i (-x)^\alpha) = \sum_{k=0}^\infty \frac{(-1)^{\alpha k} i^k x^{\alpha k}}{\Gamma (1 + k \alpha)},$$
and for the last two representations to be equal, we must have $(-1)^k=(-1)^{\alpha k}$ which happens only if $\alpha=1.$

We can also show directly that $\Ea(i x^\alpha) \Ea(-i x^\alpha)\neq 1$. For,
\begin{eqnarray*}
\Ea(i x^\alpha) \Ea(-i x^\alpha)&=&\left( \sum_{k=0}^\infty \frac{(ix^\alpha)^k}{\Gamma (1+\alpha k)}\right)\left( \sum_{m=0}^\infty \frac{(-ix^\alpha)^m}{\Gamma (1+\alpha m)}\right)\\
&=& \left( \sum_{k,m=0}^\infty \frac{(-1)^m(ix^\alpha)^{k+m}}{\Gamma (1+\alpha m)\Gamma (1+\alpha k)}\right)\\
&=& \sum_{n=0}^\infty i^n x^{\alpha n}\sum_{k=0}^n \frac{ (-1)^{n-k}}{\Gamma (1+ \alpha k) \Gamma (1+ \alpha (n-k))}\\
&=& \sum_{n=0}^\infty i^n x^{\alpha n}A_n(\alpha),
\end{eqnarray*}
where
$$ A_n(\alpha)=\sum_{k=0}^n \frac{ (-1)^{n-k}}{\Gamma (1+ \alpha k) \Gamma (1+ \alpha (n-k))}.$$
It is easy to see that $A_0 =1, A_{2k+1}=0$, $\forall k \in \N_0$. However,
\begin{eqnarray*}
A_2 &=& \frac{1}{\Gamma (1+2\alpha )} -\frac{1}{\Gamma^2 (1+\alpha )}+ \frac{1}{\Gamma (1+2\alpha )} \\
&=& \frac{2}{\Gamma (1+2\alpha )}-\frac{1}{\Gamma^2 (1+\alpha )}\\
&=& \frac{2\Gamma^2 (1+\alpha )-\Gamma (1+2\alpha )}{\Gamma (1+2\alpha )\Gamma^2 (1+\alpha )}\neq 0.
\end{eqnarray*}
In fact, all the even $A_{2k}\neq 0,$ but all of them are equal to zero only if $\alpha =1.$

In Figure \ref{fig1}, the product $p(x,\alpha) := \Ea(i x^\alpha) \Ea(-i x^\alpha)$ is plotted for several values of $0 < \alpha \leq 1$.
\begin{figure}[h!]
\includegraphics[width=9cm,height=4.5cm]{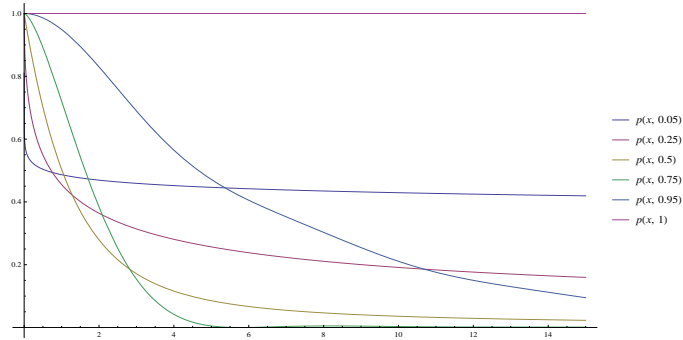}
\caption{The product $\Ea(i x^\alpha) \Ea(-i x^\alpha)$ for several values of $0 < \alpha \leq 1$.}\label{fig1}
\end{figure}

Moreover, if there existed a real number $M>0$ such that $\Ea (i M^\alpha)=1,$ we must have $ \ca (M^\alpha)=1, \; \sa (M^\alpha)=0.$ But for example, for  $\alpha = \frac12,$ we have
$$\cos_{1/2} (x^{1/2})=e^{-x} \mbox {  and  } \sin_{1/2} (x^{1/2})=\sum_{k=0}^\infty (-1)^k \frac{x^{k+1/2} }{\Gamma (k+3/2)}.$$
Using the Pochammer notation, we have $\Gamma (k+3/2)=\Gamma(3/2)(3/2)_k$ and hence
\begin{eqnarray*}
 \sin_{1/2} (x^{1/2})& = & \frac{x^{1/2}}{\Gamma (3/2)} \sum_{k=0}^\infty (-1)^k \frac{x^{k} }{(3/2)_k} \\
 &=& \frac{x^{1/2}}{\Gamma (3/2)} \sum_{k=0}^\infty  \frac{(1)_k (-x)^{k} }{(3/2)_k (1)_k}\\
 &=& \frac{x^{1/2}}{\Gamma (3/2)} \sum_{k=0}^\infty \frac{(1)_k (-x)^{k} }{(3/2)_k k!}=\frac{x^{1/2}}{\Gamma (3/2)} \Phi(1,3/2; -x),
 \end{eqnarray*}
 where $\Phi (a,b; x)$ is the confluent hypergeometric function. Using the relation
 $$ \Phi (a,b; x)=e^x\Phi (b-a,b; -x) $$ we obtain
 $$ \Phi (1,3/2; -x)=e^{-x}\Phi (1/2,3/2; x) ,$$ or
 $$\sin_{1/2} (x^{1/2})= \frac{x^{1/2}}{\Gamma (3/2)} e^{-x}\Phi (1/2,3/2; x). $$
 But in view of the relationship between the confluent hypergeometric and the error function
 $$ \mbox{Erf}(z)=z\Phi \left( 1/2, 3/2; -z^2 \right)=\int_0^z e^{-t^2}dt , $$ it is clear that
the only solution for $\cos_{1/2} (M^{1/2})=1$ and $\sin_{1/2} (M^{1/2})=0$ is $M=0,$ i.e., there is no period for these fractional trigonometric functions for $\alpha = \frac12$.

In the next section we show that our results hold not only for $\alpha =1/2$ but for all $0 < \alpha < 1. $

\section{The Main Result}

In this section we extend the  results of the previous section from $\alpha =1/2$  to all $0 < \alpha < 1. $
To this end, we need the following definition.

\begin{definition}
A function $f: \R^+\to \R$ is called completely monotonic (CM)
if $f\in C^\infty$ and $\forall\,n\in \N_0\;\forall\,x\in \R^+:\;(-1)^n f^{(n)}(x) \geq 0$.
\end{definition}

\begin{theorem}
Let $0 \leq \alpha < 1$. Then there does not exist an $M_\alpha > 0$ so that
\[
\Ea (i M_\alpha^\alpha) = 1\quad\text{or, equivalently,}\quad \ca (M_\alpha^\alpha) = 1\;\;\text{and}\;\;\sa(M_\alpha^\alpha) = 0.
\]
Moreover, the relation
\be
\Ea (\lambda x^\alpha) \Ea (\lambda y^\alpha) = \Ea(\lambda (x+y)^\alpha), \quad 0 < \alpha \leq 1,\;\; \lambda\in \C,\label{Eq3}
\ee
can hold only if $\alpha = 1$.
\end{theorem}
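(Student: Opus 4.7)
The plan is to handle the two assertions of the theorem separately.

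For the semigroup equation (\ref{Eq3}), I would argue directly from the power series. Specializing to $\lambda=1$ and $y=x>0$ in (\ref{Eq3}) yields $E_\alpha(x^\alpha)^2 = E_\alpha((2x)^\alpha) = E_\alpha(2^\alpha x^\alpha)$. Both sides are ordinary convergent power series in the real variable $u:=x^\alpha$, and matching the coefficient of $u$ gives $2/\Gamma(1+\alpha) = 2^\alpha/\Gamma(1+\alpha)$, i.e.\ $2^\alpha=2$, hence $\alpha=1$. (A parallel check with $\lambda=i$ together with the Cauchy-product computation carried out in Section~2, where it is shown that $E_\alpha(ix^\alpha)E_\alpha(-ix^\alpha)\neq 1$ unless $\alpha=1$, gives the same conclusion.)

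For the first assertion, i.e.\ the nonexistence of a period $M_\alpha>0$ for $0<\alpha<1$, the plan is to prove the stronger statement $|E_\alpha(ix^\alpha)|<1$ for every $x>0$; this rules out $E_\alpha(iM_\alpha^\alpha)=1$ at once. The starting point is Pollard's classical theorem, which says that $E_\alpha(-x)$ is CM on $[0,\infty)$ whenever $0<\alpha\leq 1$. Combining this with Bernstein's representation theorem produces a probability measure $\mu_\alpha$ on $[0,\infty)$ such that
\[
E_\alpha(-x) \;=\; \int_0^\infty e^{-xt}\,d\mu_\alpha(t),\qquad x\geq 0.
\]
Both sides are entire in their common argument, so analytic continuation to the purely imaginary axis yields
\[
E_\alpha(ix^\alpha) \;=\; \int_0^\infty e^{ix^\alpha t}\,d\mu_\alpha(t),\qquad x\geq 0.
\]
The triangle inequality then gives $|E_\alpha(ix^\alpha)|\leq 1$, with equality possible only when $t\mapsto e^{ix^\alpha t}$ is $\mu_\alpha$-a.e.\ constant. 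Since $\mu_\alpha$ cannot be concentrated at a single point (otherwise $E_\alpha(-x)$ would reduce to a pure exponential, contradicting $0<\alpha<1$), this fails for every $x>0$, and strict inequality prevails. The edge case $\alpha=0$ is trivial, since $E_0(iM^0)=E_0(i)=1/(1-i)\neq 1$ for any $M>0$.

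The main technical obstacle is the justification of the analytic-continuation step from the real axis to purely imaginary arguments: one needs enough decay of the Pollard density $g_\alpha$ to shift the Laplace integral onto the imaginary axis, and one needs the triangle inequality to be \emph{strict}. Both issues are handled by the explicit form of the Pollard density, whose polynomial decay gives convergence of the resulting Fourier-type integral and whose absolute continuity forces strictness for every $x>0$; this is also what makes the argument work uniformly across the entire range $0<\alpha<1$ rather than only in the subregime $\alpha\leq 1/2$, where simpler composition-with-Bernstein tricks would already apply to $\cos_\alpha(x^\alpha)=E_{2\alpha}(-x^{2\alpha})$.
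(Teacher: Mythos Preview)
Your approach is correct in outline and genuinely different from the paper's.

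For the functional equation \eqref{Eq3}, the paper invokes the classical fact that the only continuous nowhere-zero solutions of $F(x)F(y)=F(x+y)$ are exponentials $e^{cx}$, and then compares the Taylor expansions of $\Ea(\lambda x^\alpha)$ and $e^{cx}$ at $x=0$. Your coefficient-matching at $y=x$ is more direct and self-contained; it avoids the appeal to Cauchy's functional equation entirely and yields $2^\alpha=2$ in one line.

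For the nonexistence of a period, the paper works with $\ca(x^\alpha)=E_{2\alpha}(-x^{2\alpha})$ and splits into cases according to whether $2\alpha\le 1$ or $2\alpha>1$: in the former range, complete monotonicity of $E_{2\alpha}(-\,\cdot)$ composed with the Bernstein function $x\mapsto x^{2\alpha}$ already gives $\ca(x^\alpha)<1$; in the latter, the authors invoke the Mainardi--Gorenflo decomposition $E_{2\alpha}(-x^{2\alpha})=f_{2\alpha}+g_{2\alpha}$ into a CM part and an explicit exponentially damped cosine, and bound each piece by hand. Your route---Pollard's Bernstein representation of $E_\alpha(-\,\cdot)$ itself, continued to the imaginary axis---yields the stronger bound $|\Ea(ix^\alpha)|<1$ uniformly for all $0<\alpha<1$ and $x>0$, with no case distinction at $\alpha=\tfrac12$. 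This is cleaner, though it trades the paper's elementary real-variable estimates for a (mild) analytic-continuation argument and knowledge of the Pollard density.

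Two points deserve tightening. First, the Laplace integral $\int_0^\infty e^{-zt}\,d\mu_\alpha(t)$ is not entire in $z$; it is analytic only on $\mathrm{Re}\,z>0$ and extends to $\mathrm{Re}\,z=0$ by dominated convergence (since $\mu_\alpha$ is a probability measure and $|e^{-zt}|\le 1$ there). The identity on the imaginary axis then follows by the identity theorem on the open half-plane plus continuity to the boundary---no decay of the Pollard density is actually needed for this step. Second, ``$\mu_\alpha$ is not a point mass'' is not by itself sufficient for strict inequality: equality in the triangle inequality would only force the support of $\mu_\alpha$ into a coset of $(2\pi/x^\alpha)\mathbb{Z}$, not into a single point. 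Your subsequent appeal to absolute continuity of the Pollard density is the correct argument and should replace the point-mass remark.
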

\begin{proof}
We first note that $\ca (x^\alpha) = E_{2\alpha} (-x^{2\alpha})$. This is verified by direct computation or by referring to the duplication formula \cite[Eqn. 2.14]{M}. It suffices to show that $\ca (x^\alpha) < 1$ for all $x > 0$.

To this end, we use a result from \cite{P} which states that the Mittag-Leffler function $E_{\alpha} (-x)$ is completely monotonic for all $0\leq \alpha\leq 1$ and $x\geq 0$. As $x\mapsto x^\alpha$ is a Bernstein function for $0 < \alpha < 1$ and the composition $E_{\alpha} (- \,\cdot) \circ (\cdot)^\alpha$ of the CM function $E_{\alpha} (-\,\cdot)$ with the Bernstein function $x\mapsto x^\alpha$ is again a CM function \cite[Theorem 3.7. (ii)]{Schill}, we see that $E_{2\alpha} (-x^{2\alpha})$ is a CM function for $0 < \alpha < \frac12$.

As $E_{2\alpha} (0) = 1$ and $E_{2\alpha} (-x^{2\alpha})\to 0+$ (see, for instance, \cite[Eq. (3.3)]{Main2}), the complete monotonicity for $ 0 < \alpha < \frac12$ and $x > 0$ now implies that there cannot exist an $M_\alpha > 0$ such that $\ca (M_\alpha^\alpha) = 1$.

Now suppose $\frac12 < \alpha < 1$. It is shown in \cite[Section 5.2]{M} that in this case the Mittag-Leffler function $E_{2\alpha}$ can be written as a sum of two functions, $f_{2\alpha}$ and $g_{2\alpha}$, where the former is CM and the latter oscillatory. More precisely,
\[
E_{2\alpha} (-x^{2\alpha}) = f_{2\alpha}(-x^{2\alpha}) + g_{2\alpha}(-x^{2\alpha}), \quad \frac12 < \alpha < 1, \;\;x \geq 0,
\]
where
\begin{align*}
f_{2\alpha}(-x^{2\alpha}) &:= \frac{1}{\pi}\,\int_0^\infty \frac{e^{-s x^{2\alpha}}\,s^{2\alpha}\sin(2\alpha\pi)}{s^{4\alpha} + 2 s^{2\alpha} \cos(2\alpha\pi) + 1}\,ds,
\end{align*}
and
\begin{align*}
g_{2\alpha}(-x^{2\alpha}) &:= \frac{2}{\alpha}\,e^{-x^{2\alpha}\,\cos (\pi/2\alpha)}\,\cos \left(x^{2\alpha}\,\sin \frac{\pi}{2\alpha}\right).
\end{align*}
The function $f_{2\alpha}$ is CM, satisfies $f_{2\alpha} (0) = 1 - \frac2\alpha$, and increases towards zero from above, whereas the function $g_{2\alpha}$ is oscillatory with exponentially decaying amplitude and $g_{2\alpha} (0) = \frac{2}{\alpha}$. Hence, $E_{2\alpha} (-x^{2\alpha}) < 1$, for all $\frac12 < \alpha < 1$ and $x > 0$. Thus, there cannot exist an $M_\alpha$ in this case either.

The case $\alpha = \frac12$ was considered above.

Next we show that the purported functional equation \eqref{Eq3} can hold only if $\alpha =1.$
First note that $\Ea (\lambda x^\alpha) = \Ea \circ h (x)$, where $h(x) := \lambda x^\alpha$. Writing $F_\alpha$ for the composition $\Ea \circ h$, Eq. \eqref{Eq3} reads
\be
F_\alpha(x) F_\alpha (y) = F_\alpha (x+y).\label{Eq4}
\ee
However, the only nonzero continuous solutions of Eq. \eqref{Eq4} are exponential functions of the form $e^{c x}$, where $c\in \C$; see for instance \cite[Chapter 2]{Acz}. Therefore, $F_\alpha (x) = \Ea (\lambda x^\alpha) = e^{c x}$, for all $x$. Successively differentiating the power series for $\Ea (\lambda x^\alpha) $ and $e^{c x}$ and letting $x\to 0$, shows that $\lambda = c$ and $\alpha = 1$.
\end{proof}

Figure \ref{fig2} below depicts some graphs of $\ca (x^{\alpha}) = E_{2\alpha} (-x^{2\alpha})$, for $0 < x \leq 1$. The case $\alpha := 1$ produces the cosine function.
\begin{figure}[h!]
\includegraphics[width=10cm,height=4.5cm]{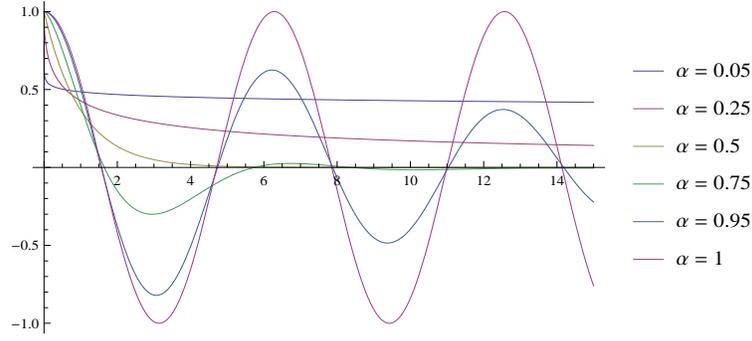}
\caption{$\ca (x^\alpha)$ for several values of $0 < \alpha \leq 1$.}\label{fig2}
\end{figure}

In \cite{Jum12}, the purported functional equation for $\Ea (\lambda x^\alpha)$ is derived using several approaches. One of them is the product rule
\be\label{eq10}
D^\alpha (f g) = g D^\alpha f + f D^\alpha g.
\ee
However, this rule is not correct.

To see this, let $g := f := x^{1/2}$. It is straight-forward to establish that for $x \geq 0$,  $p > -1$, and $0 < \alpha < 1$, the fractional derivative of $x^{p}$ is given by
\[
D^\alpha x^p = \frac{\Gamma (1 + p)}{\Gamma (1 + p - \alpha)}\, x^{p - \alpha}.
\]
Hence, the left-hand side of \eqref{eq10} computes to
\[
D^\alpha (x) =  \frac{x^{1 - \alpha}}{\Gamma (2 - \alpha)},
\]
whereas the right-hand side equals
\[
2 x^{1/2} D^\alpha (x^{1/2}) = \frac{\sqrt{\pi}\,x^{1-\alpha}}{\Gamma(\frac32 - \alpha)}.
\]
Both sides are identical only if $\alpha = 1$.

Similarly, the two purported chain rules, which are also employed in the derivation of \eqref{Eq3}, namely
\[
D^\alpha (f\circ u) (x) = \frac{d f}{du} \,D^\alpha u (x)\quad\text{ and }\quad D^\alpha (f\circ u) (x) = (D^\alpha_u f) \cdot \left(\frac{du}{dx}\right)^\alpha,
\]
are not correct. To validate this, take for the former, $f(x) := x^2$ and $u(x) := x^{1/2}$, $x\geq 0$. Then, for any $0 < \alpha\leq 1$, one has
\be\label{eq11}
D^\alpha (f\circ u) (x) = D^\alpha x = \frac{x^{1 - \alpha}}{\Gamma (2 - \alpha)}
\ee
and
\be\label{eq12}
\frac{d f}{du}\bigg\lvert_{u = x^{1/2}} \cdot D^\alpha u (x) = 2 x^{1/2} D^\alpha x^{1/2} = \frac{\sqrt{\pi}\,x^{1 - \alpha}}{\Gamma (\frac32 - \alpha)},
\ee
and this two expressions are identical only if $\alpha = 1$. Now for the latter, take $f(u) := u^{1/2}$ and $u(x) := x^2$, $x \geq 0$. Then, for any $0 < \alpha \leq 1$, we again have
\[
D^\alpha (f\circ u) (x) = D^\alpha x = \frac{x^{1 - \alpha}}{\Gamma (2 - \alpha)},
\]
whereas now
\be\label{eq13}
(D^\alpha_u f)\big\lvert_{u = x^2} \cdot \left(\frac{du}{dx}\right)^\alpha = D^\alpha_u (u^{1/2})\big\lvert_{u = x^2} (2x)^\alpha = \frac{2^{\alpha -1}\,\sqrt{\pi}\,x^{1-\alpha}}{\Gamma(\frac32 - \alpha)}.
\ee
Eqns. \eqref{eq11}, \eqref{eq12}, and \eqref{eq13} are identical only if $\alpha = 1$.

For illustrative purposes, we plotted the difference $\Ea(\lambda (x+y)^\alpha) - \Ea (\lambda x^\alpha) \Ea (\lambda y^\alpha)$  in Figure \ref{fig3} for $(x,y) \in [0,2]^2$, $\lambda = 1$, $\alpha = \frac14$ (left), and $\alpha=\frac34$ (right).

\begin{figure}[h!]
\includegraphics[width=5cm,height=3cm]{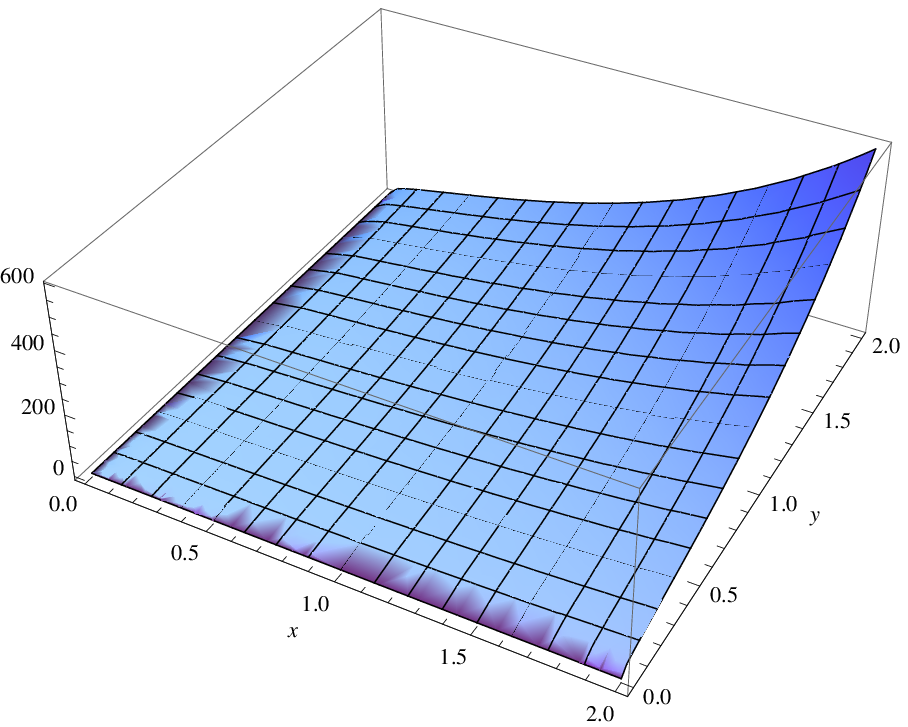}\hspace{1cm}
\includegraphics[width=5cm,height=3cm]{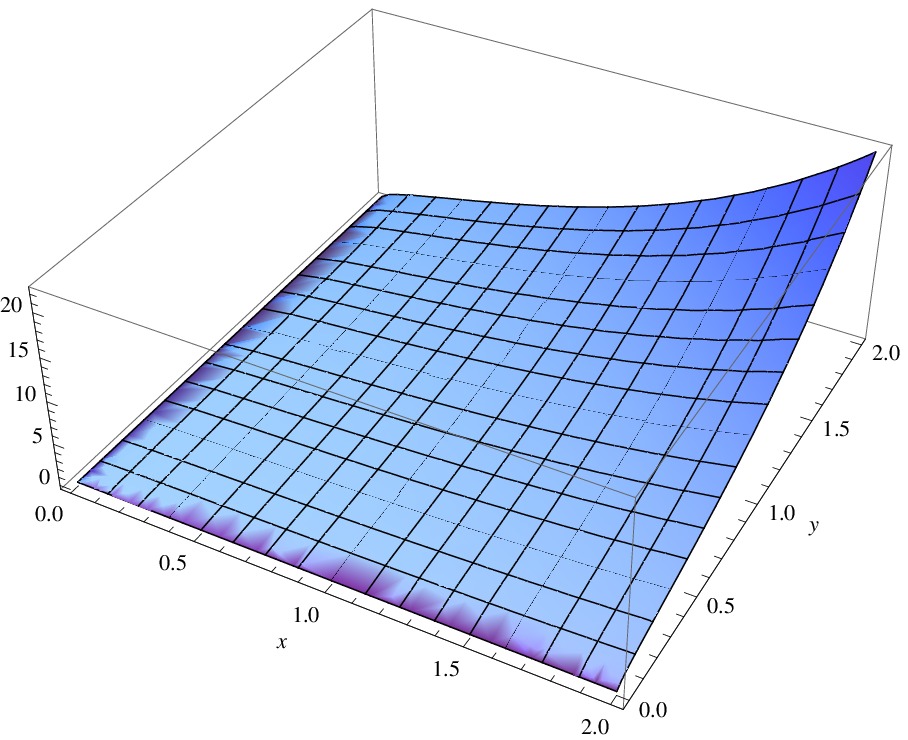}
\caption{The difference $\Ea(\lambda (x+y)^\alpha) - \Ea (\lambda x^\alpha) \Ea (\lambda y^\alpha)$.}\label{fig3}
\end{figure}


\begin{thebibliography}{99}

\bibitem{Acz}	J. Acz\'{e}l, \textit{Lectures on Functional Equations and Their Applications}, Academic Press, New York, 1966.

\bibitem{G}	R. Gorenflo, A. A. Kilbas, F. Mainardi, S. V. Rogosin, \textit{Mittag-Leffler Functions, Related Topics and Applications}, Springer Verlag, Berlin, Germany, 2014.

\bibitem{G2}	R. Gorenflo, F. Mainardi, and H. Srivastava, Special Functions in Fractional Relaxation-Oscillation and Fractional Diffusion-Wave Phenomena, in \textit{8th Int. Coll. on Differential Equations, D. Bainov Ed., VSP, Utrecht, The Netherlands, 1997}, 195--202.

\bibitem{HMS}	H. J. Haubold, A. M. Mathai, and R. K. Saxena, Mittag-Leffler Functions and Their Applications, \textit{Journal of Applied Mathematics}, Hindawi Publishing Corporation, Volume 2011, Article ID 298628, 51 pages.

\bibitem{Jum05}	G. Jumarie, On the representation of fractional Brownian motion as an integral with respect to $(dt)^a$, \textit{Appl. Math. Letters} \textbf{18} (2005), 739--748.

\bibitem{Jum06a}	G. Jumarie, Modified Riemann-Liouville derivative and fractional Taylor series of non differentiable function. Further results, \textit{Computers and Mathematics with Applications}, \textbf{51} (2006), 1367--1376.

\bibitem{Jum06b}	G. Jumarie, Fractionalization of the complex-valued Brownian motion of order $n$ using Riemann-Liouville derivative. Applications to mathematical finance and stochastic mechanics, \textit{Chaos, Solitons \& Fractals} \textbf{28} (2006), 1285--1305.

\bibitem{Jum08}	G. Jumarie, Fourier's transform of fractional order via Mittag-Leffler function and modified Riemann-Liouville derivative, \textit{J. Appl. Math. \& Informatics}, \textbf{26}, No. 5--6 (2008), 1101--1121.

\bibitem{Jum09a}	G. Jumarie, Table of some basic fractional calculus formulae derived from a modified Riemann-Liouville derivative for non-differentiable functions, \textit{Appl. Math. Letters}, \textbf{22} (2009), 378--385.

\bibitem{Jum09b}	G. Jumarie, Laplace's transform of fractional order via the Mittag-Leffler function and modified Riemann-Liouville derivative, \textit{Appl. Math. Letters}, \textbf{22} (2009), 1659--1664.

\bibitem{Jum10}	G. Jumarie, Cauchy's integral formula via the modified Riemann-Liouville derivative
for analytic functions of fractional order, \textit{Applied Math. Letters}, \textbf{23} (2010), 1444--1450.

\bibitem{Jum12}	G. Jumarie, On the fractional solution of the equation $f(x+y)=f(x)f(y)$ and its application to fractional Laplace's transform, {\it Appl. Math. \& Comput., } Vol.219 (2012), pp. 1625-1643.

\bibitem{M}	F. Mainardi and R. Gorenflo, On Mittag-Leffler functions in fractional evolution processes, \textit{J. Comput. and Appl. Math.}, \textbf{118} (2000), 283--299.

\bibitem{Main} F. Mainardi, Fractional calculus and waves in linear viscoelasticity, London: Imperial College Press (2010)
.
\bibitem{Main2} F. Mainardi, On some properties of the Mittag-Leffler function $E_\alpha (-t^\alpha)$, completely monotone for $t>0$ with $0 < \alpha < 1$, \textit{Discrete and Continuous Dynamical Systems, Series B}, \textbf{19}(7) (2014), 2267--2278.

\bibitem{mr}		K. S. Miller and B. Ross, \textit{An Introduction to the Fractional Calculus and Fractional Differential Equations}, John Wiley \& Sons, Inc. New York, 1993.

\bibitem{Mittag} G. Mittag-Leffler, Sur la representation fonction $E_\alpha$, {\it Comptes Rendus Hebdomadaires. Acad. Sci.,} Paris, 2, Vol 137 (1903), pp. 554-558.

\bibitem{P}	H. Pollard, The completely monotonic character of the Mittag-Leffler function $e_\alpha (-x)$, \textit{Bull. Am. Math. Soc.}, \textbf{54} (1948), 1115--1116.

\bibitem{sam}	S. G. Samko, A. A. Kilbas, and O. I. Marichev, \textit{Fractional Integrals and Derivatives}, Gordon and Breach Science Publishers, Singapore, 1993.

\bibitem{Schill}	Ren\'{e} L. Schilling, Renming Song, and Zoran Vondra\v{c}ek, \textit{Bernstein Functions: Theory and Applications}, 2nd ed., DeGruyter, Berlin/Boston, 2012.

\end{thebibliography}
\end{document}